\newcommand{\abs}[1]{\left\lvert#1\right\rvert}
\newtheorem{theorem}{Theorem}[section]
\newtheorem{openproblem}[theorem]{Open Problem}
\def\smallskip{\addvspace{\smallskipamount}}
\def\medskip{\addvspace{\medskipamount}}
\def\bigskip{\addvspace{\bigskipamount}}
\def\makefootline{\baselineskip=24pt \line{\the\footline}}
\def\pagecontents{\ifvoid\topins\else\unvbox\topins\fi
   \dimen@=\dp255 \unvbox255
   \ifvoid\footins\else
      \vskip\skip\footins \footnoterule \unvbox\footins\fi
     \ifr@ggedbottom \kern-\dimen@ \vfil \fi}
\def\footnoterule{\kern-3pt\hrule width 2truein \kern 2.6pt}
\begin{document}

\title{{Dynamics of Delay Logistic Difference Equation in the Complex Plane}}

\author{Sk. Sarif Hassan\\
  \small {International Centre for Theoretical Sciences}\\
  \small {Tata Institute of Fundamental Research}\\
  \small {Bangalore $560012$, India}\\
  \small Email: {\texttt{sarif.hassan@icts.res.in}}\\
}

\maketitle
\begin{abstract}
\noindent The dynamics of the delay logistic equation with complex parameters and arbitrary complex initial conditions is investigated. The analysis of the local stability of this difference equation has been carried out. We further exhibit several interesting characteristics of the solutions of this equation, using computations, which does not arise when we consider the same equation with positive real parameters and initial conditions. Some of the interesting observations led us to pose some open problems and conjectures regarding chaotic and higher order periodic solutions and global asymptotic convergence of the delay logistic equation. It is our hope that these observations of this complex difference equation would certainly be an interesting addition to the present art of research in rational difference equations in understanding the behaviour in the complex domain.
\end{abstract}

\vfill
\begin{flushleft}\footnotesize
{Keywords: Pielou's Difference equation, Delay Logistic equation, Chaotic, Local asymptotic stability and Periodicity. \\
{\bf Mathematics Subject Classification: 39A10, 39A11}}
\end{flushleft}

\section{Introduction and Preliminaries}
Consider the delay logistic difference equation

\begin{equation}
z_{n+1}=\frac{\alpha  z_{n}}{1+ \beta z_{n-1}},  n=0,1,\ldots
\label{equation:total-equationA}
\end{equation}%
where the parameter $\alpha$ is complex number and the initial conditions $z_{-1}$ and $z_{0}$ are arbitrary complex numbers.

\addvspace{\bigskipamount}

The same difference equation including some variations of the this are studied when the parameter $\alpha$ and initial conditions are non-negative real numbers, Eq.(\ref{equation:total-equationA}) was investigated in \cite{B-J} and \cite{C-I}. In this present article it is an attempt to understand the same in the complex plane.

\addvspace{\bigskipamount}

The set of initial conditions ${z_{-1}, ~z_{0}} \in \mathbb{C}$ for which the solution of Eq.(\ref{equation:total-equationA}) is well defined for all $n \geq 0$ is called the $\emph{good}$ set of initial conditions or the \emph{domain of definition}. It is the compliment of the $\emph{forbidden}$ set of Eq.(\ref{equation:total-equationA}) for which the solution is not well defined for some $n \geq 0$. It is really hard to find out either the good set or forbidden set for the second and higher order rational difference equations due to the lack of an explicit form of the solutions. For the rest of the sequel it is assumed that the initial conditions belong to the $\emph{good}$ set \cite{S-E}.

\addvspace{\bigskipamount}
Our goal is to investigate the character of the solutions of Eq.(\ref{equation:total-equationA}) when the parameters are complex and the initial conditions are arbitrary complex numbers in the domain of definition.\\
\noindent
Here, we review some results which will be useful in our investigation of the behavior of solutions of the difference equation (\ref{equation:total-equationA}).

\bigskip
\noindent
Let $f:\mathbb{D}^{2} \rightarrow \mathbb{D}$ where $\mathbb{D}\subseteq \mathbf{C}$ be a continuously differentiable function. Then for any pair of initial conditions $z_0, z_{-1} \in \mathbb{D}$, the difference equation
\begin{equation}
\label{equation:introduction}
z_{n+1} = f(z_{n}, z_{n-1}) \hspace{.25in}
, \hspace{.25in}
\end{equation}
with initial conditions $z_{-1}, z_{0} \in \mathbb{D}.$\\ \\
\noindent
Then for any \emph{initial value}, the difference equation (1) will have a unique solution $\{z_n\}_n$.\\ \\
\noindent
A point $\overline{z}$ $\in$ $\mathbb{D}$ is called {\it \textbf{equilibrium point}} of Eq.(\ref{equation:introduction}) if
$$f(\overline{z}, \overline{z}) = \overline{z}.$$
\noindent
The \emph{linearized equation} of Eq.(\ref{equation:introduction}) about the equilibrium $\bar{z}$ is the linear difference equation

\begin{equation}
\label{equation:linearized-equation}
\displaystyle{
z_{n+1} = a_{0} z_{n} + a_{1}z_{n-1} \hspace{.25in} , \hspace{.25in} n=0,1,\ldots
}
\end{equation}
where for $i = 0$ and $1$.
$$a_{i} = \frac{\partial f}{\partial u_{i}}(\overline{z}, \overline{z}).$$
The \emph{characteristic equation} of
Eq.(\ref{equation:linearized-equation}) is the equation

\begin{equation}
\label{equation:characteristic-roots}
\lambda^{2} - a_{0}\lambda -a_{1} = 0.
\end{equation}

\bigskip
\noindent
The following are the briefings of the linearized stability criterions which are useful in determining the local stability character of the equilibrium
$\overline{z}$ of Eq.(\ref{equation:introduction}), \cite{S-H}.

\bigskip
\noindent
Let $\overline{z}$ be an equilibrium of the difference equation $z_{n+1}=f(z_n,z_{n-1})$.
\begin{itemize}
\item The equilibrium $\bar{z}$ of Eq. (2) is called \textbf{locally stable} if for every $\epsilon >0$, there exists a $\delta>0$ such that for every $z_0$ and $z_{-1}$ $\in$ $\mathbb{C}$ with $|z_0-\bar{z}|+|z_{-1}-\bar{z}|<\delta$ we have $|z_n-\bar{z}|<\epsilon$ for all $n>-1$.
\item The equilibrium $\bar{z}$ of Eq. (2) is called \textbf{locally stable} if it is locally stable and if there exist a $\gamma>0$ such that for every $z_0$ and $z_{-1}$ $\in$ $\mathbb{C}$ with $|z_0-\bar{z}|+|z_{-1}-\bar{z}|<\gamma$ we have $\lim_{n \to\infty}z_n=\bar{z}$.
\item The equilibrium $\bar{z}$ of Eq. (2) is called \textbf{global attractor} if for every  $z_0$ and $z_{-1}$ $\in$ $\mathbb{C}$, we have $\lim_{n \to\infty}z_n=\bar{z}$.
\item The equilibrium of equation Eq. (2) is called \textbf{globally asymptotically stable/fit} is stable and is a global attractor.
\item The equilibrium $\bar{z}$ of Eq. (2) is called \textbf{unstable} if it is not stable.
\item The equilibrium $\bar{z}$ of Eq. (2) is called \textbf{source or repeller} if there exists $r>0$ such that for every $z_0$ and $z_{-1}$ $\in$ $\mathbb{C}$ with $|z_0-\bar{z}|+|z_{-1}-\bar{z}|<r$ we have $|z_n-\bar{z}|\geq r$. Clearly a source is an unstable equilibrium.
\end{itemize}

\section{Local Stability of the Equilibriums and Boundedness}

\label{section:positive-equilibrium} In this section we establish the local stability character of the equilibria of Eq.(\ref{equation:total-equationA}) when the parameters $\alpha$ and $\beta$ are considered to be complex numbers with the initial conditions $z_0$ and $z_{-1}$ are arbitrary complex numbers.

\addvspace{\bigskipamount}

\noindent The equilibrium points of Eq.(\ref{equation:total-equationA}) are
the solutions of the equation
\[
\bar{z}=\frac{\alpha \bar{z}}{1+\beta \bar{z}}
\]
Eq.(\ref{equation:total-equationA}) has two equilibria points
$ \bar{z}_{1,2} = 0, \frac{\alpha-1}{\beta}$ respectively.
\noindent
The linearized equation of the rational difference equation(\ref{equation:total-equationA}) with respect to the equilibrium point $ \bar{z}_{1} = 0$ is

\begin{equation}
\label{equation:linearized-equation}
\displaystyle{
z_{n+1} = \alpha z_{n},  n=0,1,\ldots
}
\end{equation}

\noindent
with associated characteristic equation
\begin{equation}
\lambda^{2} - \alpha \lambda = 0.
\end{equation}

\addvspace{\bigskipamount}

\noindent
The following result gives the local asymptotic stability of the equilibrium $\bar{z}_{1}$.

\begin{theorem}
\label{Result:positive-local-stability1} The equilibriums $\bar{z}_{1}=0$ of Eq.(\ref{equation:total-equationA}) is \\ \\ locally asymptotically stable if and only if $$\abs{\alpha} <1$$\\ \\ unstable if and only if $$\abs{\alpha} < 1$$ \\ \\ and non-hyperbolic if and only if $$ \abs{\alpha} = 1$$
\end{theorem}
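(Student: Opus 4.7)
The plan is to apply the standard linearized stability theorem for planar complex difference equations directly to the characteristic equation already derived in the excerpt. Since the equilibrium is $\bar{z}_1 = 0$ and the paper has given the linearization as $z_{n+1} = \alpha z_n$ with characteristic equation $\lambda^2 - \alpha\lambda = 0$, the roots are simply $\lambda_1 = 0$ and $\lambda_2 = \alpha$. The whole stability analysis will therefore be controlled by the single quantity $|\alpha|$, because $|\lambda_1| = 0 < 1$ is automatic.

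First I would verify the linearization by computing the partial derivatives of $f(u_0,u_1) = \alpha u_0 / (1+\beta u_1)$ at $(0,0)$, obtaining $a_0 = \partial f/\partial u_0 |_{(0,0)} = \alpha$ and $a_1 = \partial f/\partial u_1 |_{(0,0)} = 0$. This confirms equations $(5)$ and $(6)$ of the excerpt, and in particular confirms that the spectrum of the linearization at $\bar{z}_1$ is $\{0,\alpha\}$.

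Next I would invoke the linearized stability criterion (the complex analogue of the Clark/Linear Stability Theorem cited as \cite{S-H}): the equilibrium is locally asymptotically stable when all roots of the characteristic polynomial lie strictly inside the open unit disk, unstable when at least one root lies strictly outside it, and non-hyperbolic when at least one root lies on the unit circle. Applying this to the spectrum $\{0,\alpha\}$ immediately yields the three cases: local asymptotic stability iff $|\alpha|<1$, instability iff $|\alpha|>1$ (I read the statement's ``unstable iff $|\alpha|<1$'' as a typographical error for $|\alpha|>1$, since otherwise the stable and unstable regimes would coincide), and non-hyperbolicity iff $|\alpha|=1$.

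There is essentially no analytic obstacle here; the only subtlety worth flagging is that the standard linearized stability theorem is usually stated over $\mathbb{R}$, whereas the paper works over $\mathbb{C}$. However, the proof by Banach fixed-point / contraction mapping on $\mathbb{C}^2$ goes through verbatim because the companion matrix of $\lambda^2 - \alpha\lambda$ has spectral radius $\max(|0|,|\alpha|) = |\alpha|$, and spectral radius strictly less than one gives contraction in a suitable norm on $\mathbb{C}^2$, so local asymptotic stability follows; conversely spectral radius greater than one gives exponential expansion along the corresponding eigendirection and hence instability. Thus the main step will be just quoting this theorem and substituting the roots; no delicate estimates are required.
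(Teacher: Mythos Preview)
Your proposal is correct and follows essentially the same approach as the paper: compute the roots $0$ and $\alpha$ of the characteristic polynomial $\lambda^2-\alpha\lambda$ and then invoke the Linearized/Local Stability Theorem. The paper's own proof is a one-line version of exactly this, so your added verification of the partial derivatives and the remark on validity over $\mathbb{C}$ simply flesh out steps the paper leaves implicit.
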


\addvspace{\bigskipamount}

\begin{proof}
The characteristic equation of the equilibrium as already mentioned above is $\lambda^{2} - \alpha  \lambda = 0$. The zeros of this polynomials are $0$ and $\alpha$. Therefore the result follows trivially by \emph{Local Stability Theorem}.

\end{proof}

\begin{theorem}
\label{Result:positive-local-stability2} The equilibriums $\bar{z}_{2}$ $=\frac{\alpha-1}{\beta}$ of Eq.(\ref{equation:total-equationA}) is \\ \\ locally asymptotically stable if and only if $$\frac{1}{3} \leq \abs{\alpha} \leq \frac{4}{3}$$\\ \\ unstable if and only if $$ 0< \abs{\alpha} <\frac{1}{3}$$
\end{theorem}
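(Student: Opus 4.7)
The plan is to linearize Eq.~(\ref{equation:total-equationA}) at the nontrivial equilibrium $\bar z_2=(\alpha-1)/\beta$, derive the associated characteristic polynomial, and apply a Schur--Cohn-type root-location criterion for polynomials with complex coefficients.

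First I would compute the partial derivatives of $f(u,v)=\alpha u/(1+\beta v)$ at $(\bar z_2,\bar z_2)$. The key algebraic simplification is that $1+\beta\bar z_2=\alpha$, which, for $\alpha\ne 0$, collapses the derivatives to $a_0=f_u(\bar z_2,\bar z_2)=\alpha/\alpha=1$ and $a_1=f_v(\bar z_2,\bar z_2)=-\alpha\beta\bar z_2/\alpha^2=(1-\alpha)/\alpha$. In particular $\beta$ drops out completely, so the local stability of $\bar z_2$ depends only on $\alpha$, consistent with how the theorem is phrased.

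Substituting into the characteristic form (\ref{equation:characteristic-roots}) yields $\lambda^2-\lambda+(\alpha-1)/\alpha=0$, with explicit roots $\lambda_\pm=\tfrac12\bigl(1\pm\sqrt{(4-3\alpha)/\alpha}\bigr)$. Local asymptotic stability is equivalent to $|\lambda_\pm|<1$, instability to $\max(|\lambda_+|,|\lambda_-|)>1$, and non-hyperbolicity to a root on the unit circle. As a sanity check: at $\alpha=4/3$ the discriminant vanishes and both roots equal $1/2$, while at $\alpha=1/3$ the roots are $2$ and $-1$, lining up with the endpoints of the stated stability window.

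The principal obstacle is to pass from $|\lambda_\pm|<1$ to the clean modulus condition $1/3\le|\alpha|\le 4/3$. Since the polynomial has complex coefficients, the real Jury/Clark inequalities are not available; I would either apply the complex Schur--Cohn criterion to $\lambda^2+p\lambda+q$ with $p=-1$ and $q=(\alpha-1)/\alpha$, or, using the closed-form roots, compute $|\lambda_\pm|^2=\lambda_\pm\overline{\lambda_\pm}$ directly. The resulting inequalities involve both $|\alpha|$ and $\arg\alpha$, so I would carefully analyse the branch of the square root, identify the region in the $\alpha$-plane on which $|\lambda_+|<1$ and $|\lambda_-|<1$ hold simultaneously, extract from it the claimed interval in $|\alpha|$, and locate the non-hyperbolic boundary where one of the roots crosses $|\lambda|=1$.
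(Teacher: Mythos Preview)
Your setup is identical to the paper's: linearize at $\bar z_2$, use $1+\beta\bar z_2=\alpha$ to obtain the characteristic equation $\lambda^2-\lambda+(\alpha-1)/\alpha=0$, and study the moduli of its roots. Where you pause and flag the difficulty---that the root conditions will involve $\arg\alpha$ and not merely $|\alpha|$---the paper's proof simply asserts that ``simple algebraic calculation of these two inequalities reduced to the condition $\tfrac13\le|\alpha|\le\tfrac43$'' and stops there.

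Your caution is warranted: the extraction step you outline cannot be carried out, because the statement itself is incorrect. A necessary condition for both roots to lie in the open unit disk is that their product does, i.e.\ $\bigl|(\alpha-1)/\alpha\bigr|<1$, which is equivalent to $\operatorname{Re}\alpha>\tfrac12$ and is certainly not an annular condition on $|\alpha|$. Concretely, take $\alpha=i$ (so $|\alpha|=1\in[\tfrac13,\tfrac43]$): the product of the roots is $1-1/i=1+i$, of modulus $\sqrt2>1$, so at least one root lies outside the unit disk and $\bar z_2$ is unstable. Even on the real axis the claim fails: for $\alpha=\tfrac34$ the roots of $\lambda^2-\lambda-\tfrac13=0$ are $(1\pm\sqrt{7/3})/2$, the larger of which exceeds $1$. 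Hence neither your outline nor the paper's ``simple algebraic calculation'' can be completed as written; the genuine stability region in the $\alpha$-plane is two-dimensional and not captured by bounds on $|\alpha|$ alone.
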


\begin{proof}
The characteristic equation of the equilibrium $\bar{z}_{2}$ is $\lambda^{2} - \lambda + \frac{\alpha-1}{\alpha} = 0$. The zeros of this polynomials are
$\frac{\alpha -\sqrt{(4-3 \alpha ) \alpha }}{2 \alpha }$ and $\frac{\alpha +\sqrt{(4-3 \alpha ) \alpha }}{2 \alpha }$. The equilibrium $\bar{z}_{2}$ is locally asymptotically stable if and only if the modulus of the zeros of the characteristic equation are less than $1$. Simple algebraic calculation of these two inequalities reduced to the condition $\frac{1}{3} \leq \abs{\alpha} \leq \frac{4}{3}$. In the similar fashion the equilibrium is unstable if and only if the modulus of the zeros are greater than $1$. Consequently, the condition reduces to $ 0< \abs{\alpha} <\frac{1}{3}$. Hence the theorem is proved.

\end{proof}

%
%
%
%
%

\noindent
Now we would like to try to find open ball $B(0, \epsilon) \in \mathbb{C}$ such that if $z_n \in B(0, \epsilon)$ and $z_{n-1} \in B(0, \epsilon)$ then $z_{n+1} \in B(0, \epsilon)$ for all $n \geq 0$. In other words, if the initial values $z_0$ and $z_{-1}$ belong to $B(0, \epsilon)$ then the solution generated by the difference equation (\ref{equation:total-equationA}) would essentially be within the open ball $B(0, \epsilon)$.
\begin{theorem}
\label{Result:positive-local-stability2} For every $\epsilon >0$ and for any complex parameter $\abs{\alpha}<1$ and $\abs{\beta}<1$, if $z_n$ and $z_{n-1}$ $\in B(0, \epsilon)$ then $z_{n+1} \in B(0, \epsilon)$ provided, $$0< \epsilon \leq \frac{1-\abs{\alpha}}{\abs{\beta}}$$
\end{theorem}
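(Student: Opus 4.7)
The plan is to bound $|z_{n+1}|$ directly from the recurrence and then choose $\epsilon$ so that the resulting estimate closes on itself. Starting from $z_{n+1}=\alpha z_{n}/(1+\beta z_{n-1})$, I would take absolute values and apply $|\alpha z_n|=|\alpha|\,|z_n|$ in the numerator together with the reverse triangle inequality $|1+\beta z_{n-1}|\geq 1-|\beta|\,|z_{n-1}|$ in the denominator to obtain
\[
|z_{n+1}| \;\leq\; \frac{|\alpha|\,|z_n|}{1-|\beta|\,|z_{n-1}|},
\]
an estimate that is valid only so long as $|\beta|\,|z_{n-1}|<1$.

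Next I would use $|z_n|<\epsilon$ and $|z_{n-1}|<\epsilon$ (strictly, since $B(0,\epsilon)$ is open) together with monotonicity in the numerator and denominator of the above fraction to replace both $|z_n|$ and $|z_{n-1}|$ by $\epsilon$, yielding $|z_{n+1}|< |\alpha|\,\epsilon/(1-|\beta|\,\epsilon)$. The final purely algebraic step is to observe that $|\alpha|\,\epsilon/(1-|\beta|\,\epsilon)\leq \epsilon$ is equivalent to $|\alpha|+|\beta|\,\epsilon \leq 1$, i.e.\ to $\epsilon \leq (1-|\alpha|)/|\beta|$—which is precisely the hypothesis. Combining with the strict bound from the open ball gives $|z_{n+1}|<\epsilon$, as required.

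The one point that needs checking is that the denominator is indeed strictly positive so that the reverse triangle inequality step is legitimate. This is automatic from the hypothesis: since $|\alpha|<1$ and $\epsilon \leq (1-|\alpha|)/|\beta|$, we get $|\beta|\,|z_{n-1}|<|\beta|\epsilon \leq 1-|\alpha|<1$, so $1-|\beta|\,|z_{n-1}|>|\alpha|\geq 0$. I do not anticipate any substantial obstacle; the entire argument is a single application of the triangle inequality followed by elementary algebra. The only conceptual subtlety is recognizing that the bound $(1-|\alpha|)/|\beta|$ (rather than the naive $1/|\beta|$) is dictated by two requirements at once—keeping the denominator bounded away from zero, and making the estimate self-consistent so that the ball $B(0,\epsilon)$ is forward invariant.
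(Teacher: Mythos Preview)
Your argument is correct and follows essentially the same route as the paper: bound $|z_{n+1}|$ via $|z_{n+1}|\le |\alpha|\epsilon/(1-|\beta|\epsilon)$ using the (reverse) triangle inequality, then impose that this be at most $\epsilon$ to recover the condition $\epsilon\le(1-|\alpha|)/|\beta|$. If anything, your treatment is more careful than the paper's, since you explicitly verify that $1-|\beta|\,|z_{n-1}|>0$ and keep track of strict versus non-strict inequalities.
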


\noindent

\begin{proof}
Let $\{z_{n}\}$ be a solution of the equation Eq.(\ref{equation:total-equationA}. Let $\epsilon>0$ be any arbitrary real number. Consider $z_n, z_{n-1} \in B(0, \epsilon)$. We need to find out an $\epsilon$ such that $z_{n+1}\in B(0, \epsilon)$ for all $n$. It is follows from the Eq.(\ref{equation:total-equationA}) that for any $\epsilon >0$, using Triangle inequality for $\abs{\alpha}<1$ and $\abs{\beta}<1$ $$\abs{z_{n+1}}=\abs{\frac{\alpha z_n}{1+\beta z_{n-1}}} \leq \frac{\abs{\alpha} \epsilon}{1-\abs{\beta} \epsilon}$$ In order to ensure that $\abs{z_{n+1}}<\epsilon$, it is needed to be $$\frac{\abs{\alpha} \epsilon}{1-\abs{\beta}\epsilon} < 1 \Leftrightarrow \abs{\alpha}-1 < -\abs{\beta} \epsilon \Rightarrow \epsilon < \frac{1-\abs{\alpha}}{\abs{\beta}}$$
\noindent
Therefore the required is followed.

\end{proof}

\section{Periodic Solutions}

\label{section:periodicity}

The global periodicity and the existence of solutions that converge to periodic solutions of the difference equation (\ref{equation:total-equationA} are adumbrated in this section.\\

A solution $\{z_n\}_n$ of a difference equation is said to be \emph{globally periodic} of period $t$ if $z_{n+t}=z_n$ for any given initial conditions. solution $\{z_n\}_n$
is said to be \emph{periodic with prime period} $p$ if p is the smallest positive integer having this property.

\subsection{Existence and Local Stability of Prime Period Two Solutions}
Here we find out prime period two solutions followed by the local stability analysis of them.
\subsubsection{Existence of Prime Period Two Solutions}
Let $\ldots, \phi ,~\psi , ~\phi , ~\psi ,\ldots$, $\phi  \neq \psi $ be a prime period two solution of the difference equation (\ref{equation:total-equationA}). Then $\phi=\frac{\alpha \psi}{1+\beta \phi}$ and $\psi=\frac{\alpha \phi}{1+\beta \psi}$. This two equations lead to the set of solutions (prime period two) except the equilibriums $0$ and $\frac{\alpha-1}{\beta}$ as $\left\{\phi \to \frac{(-0.5-0.5 \alpha ) \beta -0.5 \sqrt{(1+(-2-3 \alpha ) \alpha ) \beta ^2}}{\beta ^2},\psi \to \frac{(-0.5-0.5 \alpha ) \beta +0.5 \sqrt{(1 +(-2 -3 \alpha ) \alpha ) \beta ^2}}{\beta ^2}\right\}$ and $\left\{\phi \to \frac{(-0.5-0.5 \alpha ) \beta +0.5 \sqrt{(1+(-2 - 3 \alpha ) \alpha ) \beta ^2}}{\beta ^2},\psi \to \frac{(-0.5-0.5 \alpha ) \beta -0.5 \sqrt{(1 +(-2 -3 \alpha ) \alpha ) \beta ^2}}{\beta ^2}\right\}$.

\subsubsection{Local Stability of Prime Period Two Solutions}

\noindent
Let $\ldots, \phi ,~\psi , ~\phi , ~\psi ,\ldots$, $\phi  \neq \psi $ be a prime period two solution of the Pielou's equation (\ref{equation:total-equationA}). We set $$u_n=z_{n-1}$$ $$v_n=z_{n}$$
\noindent
Then the equivalent form of the delay Logistic difference equation (\ref{equation:total-equationA}) is
$$u_{n+1}=v_n$$ $$v_{n+1}=\frac{\alpha v_n}{1+ \beta u_{n}}$$
\noindent
Let T be the map on $(0,\infty)\times(0,\infty)$ to itself defined by $$T\left(
                                                                           \begin{array}{c}
                                                                             u \\
                                                                             v \\
                                                                           \end{array}
                                                                         \right)==\left(
                                                                                    \begin{array}{c}
                                                                                      v \\
                                                                                      \frac{\alpha v}{1+ \beta u} \\
                                                                                    \end{array}
                                                                                  \right)
                                                                         $$
\noindent
Then $\left(
                                                                           \begin{array}{c}
                                                                             \phi \\
                                                                             \psi \\
                                                                           \end{array}
                                                                         \right)$ is a fixed point of $T^2$, the second iterate of $T$. \\

$$T^2\left(
                                                                           \begin{array}{c}
                                                                             u \\
                                                                             v \\
                                                                           \end{array}
                                                                         \right)==\left(
                                                                                    \begin{array}{c}
                                                                                      \frac{\alpha v}{1+ \beta u} \\\\
                                                                                      \frac{\alpha \frac{\alpha v}{1+\beta u}}{1+\beta v} \\
                                                                                    \end{array}
                                                                                  \right)
                                                                         $$

$$T^2\left(
                                                                           \begin{array}{c}
                                                                             u \\
                                                                             v \\
                                                                           \end{array}
                                                                         \right)==\left(
                                                                                    \begin{array}{c}
                                                                                      g(u,v) \\
                                                                                      h(u,v) \\
                                                                                    \end{array}
                                                                                  \right)
                                                                         $$

\noindent
 where $g(u,v)=\frac{\alpha v}{1+\beta u}$ and $h(u,v)=\frac{\alpha \frac{\alpha v}{1+\beta u}}{1+\beta v}$. Clearly the two cycle is locally asymptotically stable when the eigenvalues of the
Jacobian matrix $J_{T^2}$, evaluated at $\left(
                                                                           \begin{array}{c}
                                                                             \phi \\
                                                                             \psi \\
                                                                           \end{array}
                                                                         \right)$ lie inside the unit disk.
We have, $$J_{T^2}\left(
                                                                           \begin{array}{c}
                                                                             \phi \\
                                                                             \psi \\
                                                                           \end{array}
                                                                         \right)=\left(
                                                                                   \begin{array}{cc}
                                                                                     \frac{\delta g}{\delta u }(\phi, \psi) & \frac{\delta g}{\delta v}(\phi, \psi) \\\\
                                                                                     \frac{\delta h}{\delta u }(\phi, \psi) & \frac{\delta h}{\delta v }(\phi, \psi) \\
                                                                                   \end{array}
                                                                                 \right)
                                                                         $$

\noindent
where $\frac{\delta g}{\delta u }(\phi, \psi)=-\frac{\alpha  \beta  \psi }{(1+\beta  \phi )^2}$ and $\frac{\delta g}{\delta v }(\phi, \psi)=\frac{\alpha }{1+\beta  \phi }$\\

$\frac{\delta h}{\delta u }(\phi, \psi)=-\frac{\alpha ^2 \beta  \psi }{(1+\beta  \phi )^2 (1+\beta  \psi )}$
and $\frac{\delta h}{\delta v }(\phi, \psi)=-\frac{\alpha ^2 \beta  \psi }{(1+\beta  \phi ) (1+\beta  \psi )^2}+\frac{\alpha ^2}{(1+\beta  \phi ) (1+\beta  \psi )}$
\\ \\
\noindent
Now, set $$\chi=\frac{\delta g}{\delta u }(\phi, \psi)+\frac{\delta h}{\delta v }(\phi, \psi)=\frac{\alpha  \left(-\beta  \psi +\frac{\alpha  (1+\beta  \phi )}{(1+\beta  \psi )^2}\right)}{(1+\beta  \phi )^2}$$ $$\lambda=\frac{\delta g}{\delta u }(\phi, \psi) \frac{\delta h}{\delta v }(\phi, \psi)-\frac{\delta g}{\delta v }(\phi, \psi) \frac{\delta h}{\delta u }(\phi, \psi)=\frac{\alpha ^3 \beta ^2 \psi ^2}{(1+\beta  \phi )^3 (1+\beta  \psi )^2}$$

\noindent
In particular for the prime period $2$ solution, \\ \\ $\left\{\phi \to \frac{(-0.5-0.5 \alpha ) \beta -0.5 \sqrt{(1+(-2-3 \alpha ) \alpha ) \beta ^2}}{\beta ^2},\psi \to \frac{(-0.5-0.5 \alpha ) \beta +0.5 \sqrt{(1 +(-2 -3 \alpha ) \alpha ) \beta ^2}}{\beta ^2}\right\}$,

$$\chi=\frac{\alpha  \left(-\frac{(-0.5-0.5 \alpha ) \beta +0.5 \sqrt{(1 +(-2 -3 \alpha ) \alpha ) \beta ^2}}{\beta }+\frac{\alpha  \left(1+\frac{(-0.5-0.5 \alpha ) \beta -0.5 \sqrt{(1+(-2.-3. \alpha ) \alpha ) \beta ^2}}{\beta }\right)}{\left(1+\frac{(-0.5-0.5 \alpha ) \beta +0.5 \sqrt{(1 +(-2-3 \alpha ) \alpha ) \beta ^2}}{\beta }\right)^2}\right)}{\left(1+\frac{(-0.5-0.5 \alpha ) \beta -0.5 \sqrt{(1+(-2-3 \alpha ) \alpha ) \beta ^2}}{\beta }\right)^2}$$

and

$$\lambda=\frac{\alpha ^3 \left((-0.5-0.5 \alpha ) \beta +0.5 \sqrt{(1 +(-2 -3 \alpha ) \alpha ) \beta ^2}\right)^2}{\beta ^2 \left(1+\frac{(-0.5-0.5 \alpha ) \beta -0.5 \sqrt{(1+(-2-3 \alpha ) \alpha ) \beta ^2}}{\beta }\right)^3 \left(1+\frac{(-0.5-0.5 \alpha ) \beta +0.5 \sqrt{(1 +(-2-3 \alpha ) \alpha ) \beta ^2}}{\beta }\right)^2}$$
\\
\noindent
and for the prime period $2$ solution, \\ \\ $\left\{\phi \to \frac{(-0.5-0.5 \alpha ) \beta +0.5 \sqrt{(1+(-2 - 3 \alpha ) \alpha ) \beta ^2}}{\beta ^2},\psi \to \frac{(-0.5-0.5 \alpha ) \beta -0.5 \sqrt{(1 +(-2 -3 \alpha ) \alpha ) \beta ^2}}{\beta ^2}\right\}$

$$\chi=\frac{\alpha  \left(-\frac{(-0.5-0.5 \alpha ) \beta -0.5 \sqrt{(1 +(-2-3 \alpha ) \alpha ) \beta ^2}}{\beta }+\frac{\alpha  \left(1+\frac{(-0.5-0.5 \alpha ) \beta +0.5 \sqrt{(1+(-2.-3. \alpha ) \alpha ) \beta ^2}}{\beta }\right)}{\left(1+\frac{(-0.5-0.5 \alpha ) \beta -0.5 \sqrt{(1 +(-2-3 \alpha ) \alpha ) \beta ^2}}{\beta }\right)^2}\right)}{\left(1+\frac{(-0.5-0.5 \alpha ) \beta +0.5 \sqrt{(1+(-2-3 \alpha ) \alpha ) \beta ^2}}{\beta }\right)^2}$$
and
$$\lambda=\frac{\alpha ^3 \left((-0.5-0.5 \alpha ) \beta -0.5 \sqrt{(1 +(-2-3 \alpha ) \alpha ) \beta ^2}\right)^2}{\beta ^2 \left(1+\frac{(-0.5-0.5 \alpha ) \beta +0.5 \sqrt{(1+(-2.-3. \alpha ) \alpha ) \beta ^2}}{\beta }\right)^3 \left(1+\frac{(-0.5-0.5 \alpha ) \beta -0.5 \sqrt{(1 +(-2 -3 \alpha ) \alpha ) \beta ^2}}{\beta }\right)^2}$$

\noindent \\
Then it follows from the \emph{Linearized Stability Theorem} that both the eigenvalues of the $J_{T^2}\left(
                                                                           \begin{array}{c}
                                                                             \phi \\
                                                                             \psi \\
                                                                           \end{array}
                                                                         \right)$ lie inside the unit disk if and only if $|\chi| < 1+ |\lambda| < 2$.\\

\noindent
In particular, for $\alpha=i$ and $\beta=2+3i$ the prime period $2$ solutions is $\left\{\phi \to -0.294567+0.313317 i, \psi \to -0.0900486-0.236394 i\right\}$. The periodic trajectory is shown in \emph{Fig.1} of which the $\abs{\chi}=1.38112$ and $\abs{\lambda}=0.689678$. By the Linear Stability theorem ($|\chi| < 1+ |\lambda| < 2$) the prime period $2$ solution is \emph{locally asymptotically stable}.

\begin{figure}[H]
      \centering

      \resizebox{15cm}{!}
      {
      \begin{tabular}{c}
      \includegraphics [scale=5.0]{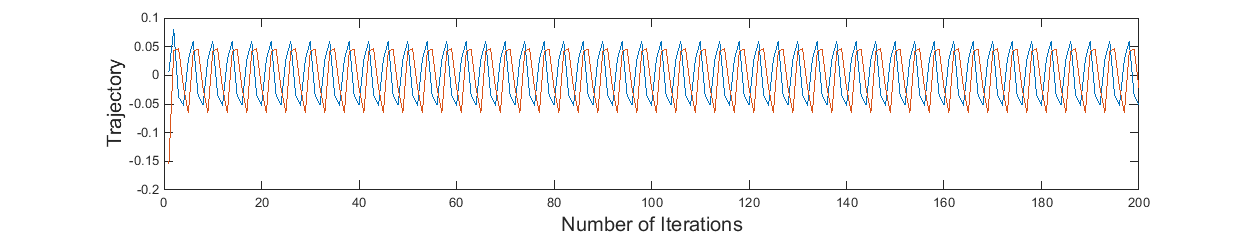}\\\\

      \end{tabular}
      }
\caption{Periodic Cycle solution of period $3$ for $\alpha=i$ and $\beta=2+3i$.}
      \end{figure}

\noindent
For $\alpha=1+i$ and $\beta=2+3i$ the prime period $2$ solutions is $\phi \to -0.168166+0.534411 i,$ \\ $\psi \to -0.370295-0.226718 i$. The periodic trajectory is shown in \emph{Fig.2} of which the $\abs{\chi}=1.5$ and $\abs{\lambda}=1.58114$. Hence the prime period $2$ solution is \emph{unstable}.

\begin{figure}[H]
      \centering

      \resizebox{15cm}{!}
      {
      \begin{tabular}{c}
      \includegraphics [scale=5.0]{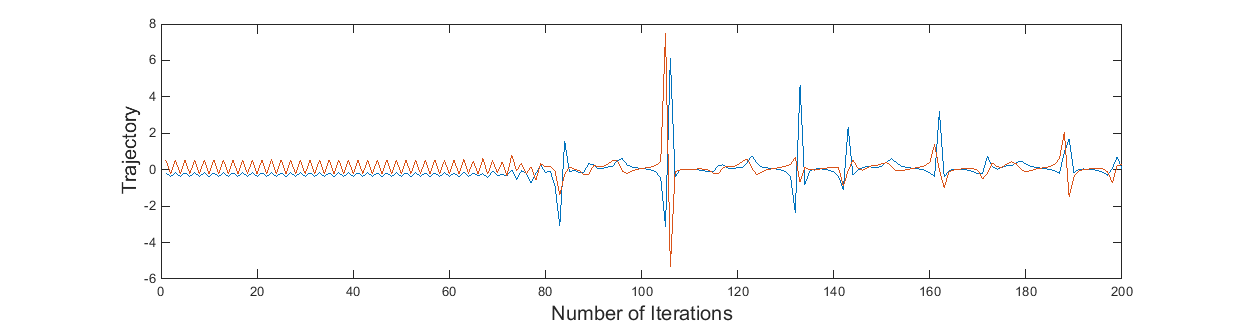}\\\\

      \end{tabular}
      }
\caption{Periodic Cycle solution of period $3$ for $\alpha=i$ and $\beta=2+3i$.}
      \end{figure}

\subsection{Higher Order Cycles in Solutions}
Here we shall explore the higher order periodic cycle of the difference equation (\ref{equation:total-equationA}) in a vivid manner.
\noindent\\
For the parameters $\alpha=i$ and $\beta=2+3i$ of the difference equation, the period $3$ cycle is the following:\\
$\left\{z_0\to 0.316268 +0.129975 i,z_1\to -0.288941+0.157085 i,z_2\to -0.181173-0.056291 i\right\}$. The corresponding periodic trajectory is shown \emph{Fig.3} \\

\begin{figure}[H]
      \centering

      \resizebox{15cm}{!}
      {
      \begin{tabular}{c}
      \includegraphics [scale=5.0]{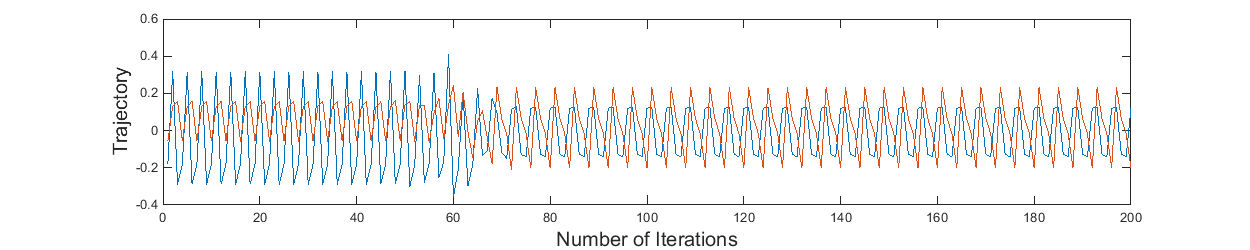}\\\\

      \end{tabular}
      }
\caption{Periodic Cycle solution of period $3$ for $\alpha=i$ and $\beta=2+3i$.}
      \end{figure}

\noindent
For the parameter $\alpha=\frac{1}{3}+i$ and $\beta=2+i$ a periodic cycle of period $10$ has been encountered. The periodic trajectory is \\ $z_0\to -0.0197446-1.28723 i,z_1\to 1.03398 +0.925847 i,z_2\to -0.406487+0.128166 i,$ \\ $z_3\to -0.125003-0.00142325 i,z_4\to 0.63328 -0.516259 i,z_5\to 0.83925 +0.756558 i,z_6\to -0.223017+0.36021 i,z_7\to -0.116754+0.0893373 i,z_8\to -0.239031+0.16474 i,z_9\to -0.382611-0.236912 i$. The plot of the trajectory is given in \emph{Fig.4}.

\begin{figure}[H]
      \centering

      \resizebox{15cm}{!}
      {
      \begin{tabular}{c}
      \includegraphics [scale=5.0]{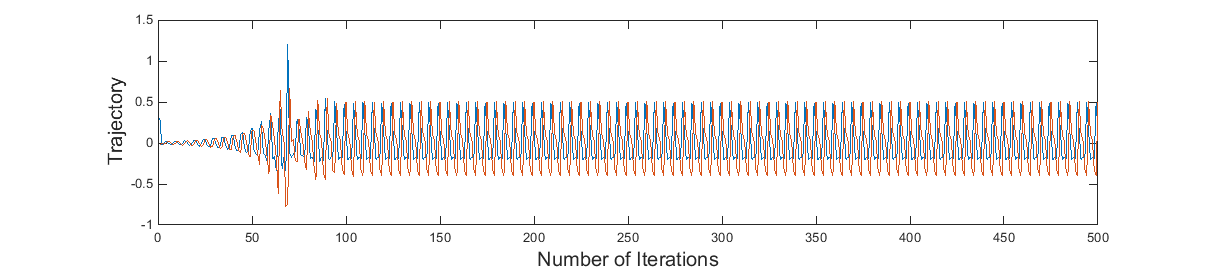}\\\\

      \end{tabular}
      }
\caption{Periodic Cycle solution of period $3$ for $\alpha=i$ and $\beta=2+3i$.}
      \end{figure}

\noindent
Keeping fixed $\beta$ as $1$ computationally it is also seen that for $\alpha=(15, 26)$ and $(55, 95)$, almost all solutions corresponding to the initial values $z_0, z_{-1} \in \mathbb{D}$ converges to the periodic point $(-356.366, -194.0009)$ and (11.6656, -0.1928) of period $55$ and $199$ respectively. Certainly there are many more such examples of $\alpha$ for which the same happens.
\noindent
The plot of the periodic trajectory are given the \emph{Fig.5} and \emph{Fig.6} for the above two examples.

\begin{figure}[H]
      \centering

      \resizebox{12cm}{!}
      {
      \begin{tabular}{c}
      \includegraphics [scale=5.0]{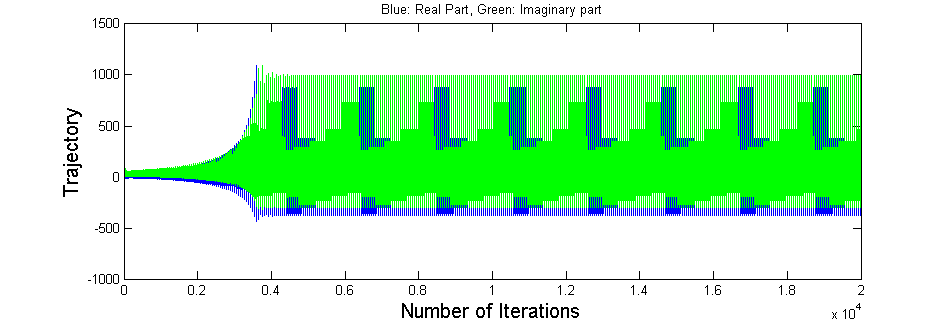}\\\\
      \includegraphics [scale=4.0]{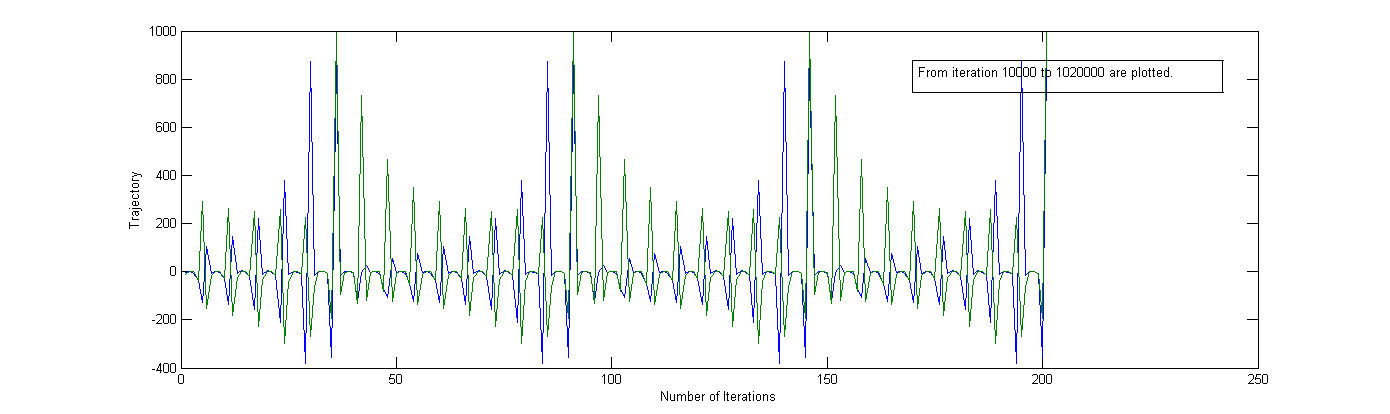}\\

      \end{tabular}
      }
\caption{Periodic Cycle solution for $\alpha=(15, 26)$ and $\beta=1$.}
      \begin{center}

      \end{center}
      \end{figure}

\noindent

\begin{figure}[H]
      \centering

      \resizebox{12cm}{!}
      {
      \begin{tabular}{c}
      \includegraphics [scale=4.0]{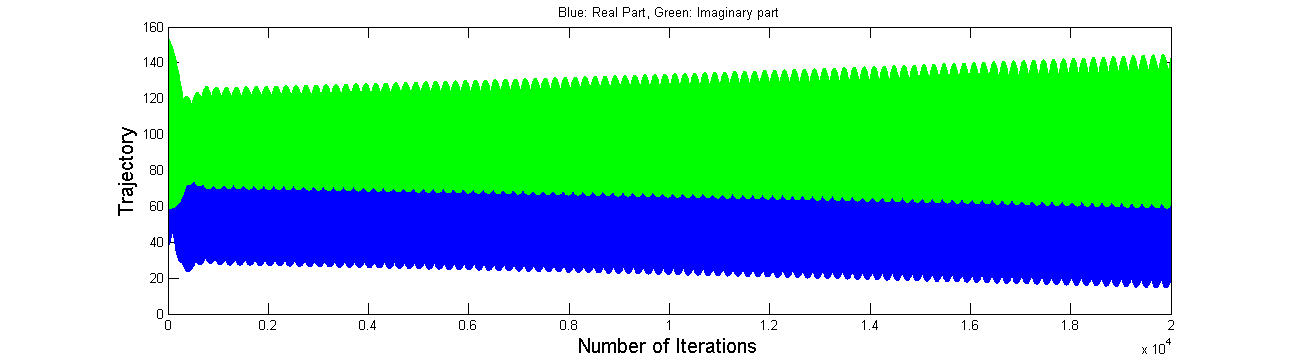}\\\\
      \includegraphics [scale=4.0]{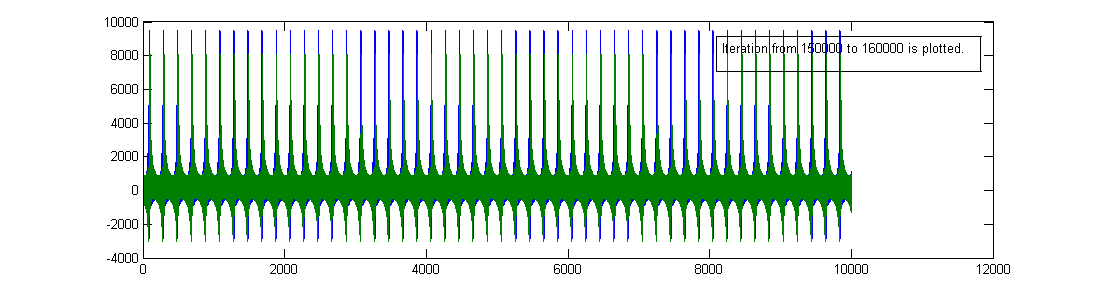}\\

      \end{tabular}
      }
\caption{Periodic Cycle solution for $\alpha=(55, 95)$ and $\beta=1$.}
      \begin{center}

      \end{center}
      \end{figure}

\noindent
In both the \emph{Fig.5} and \emph{Fig.6}, the two figures are shown with different number of iterations of the periodic trajectory.

\noindent
Now we shall demonstrate few computational examples where periodic cycle exists with \emph{very high period} roughly of order $1000$. We choose $\alpha=(35, 94)$ and $\beta=(88,55)$ and for arbitrary complex initial values $z_0$ and $z_{-1}$, the trajectory of periodic cycles are plotted and corresponding phase space also plotted in the \emph{Fig.7}.

\begin{figure}[H]
      \centering

      \resizebox{11cm}{!}
      {
      \begin{tabular}{c}
      \includegraphics [scale=4.0]{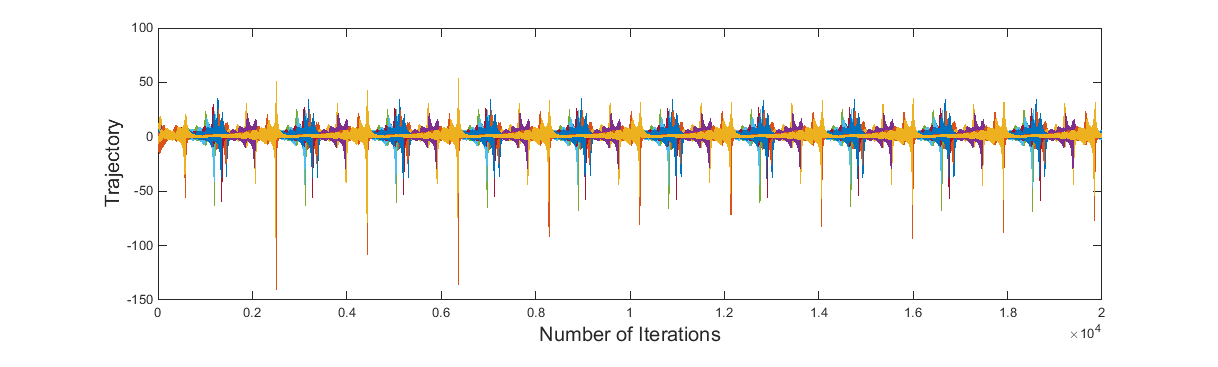}\\
      \includegraphics [scale=4.0]{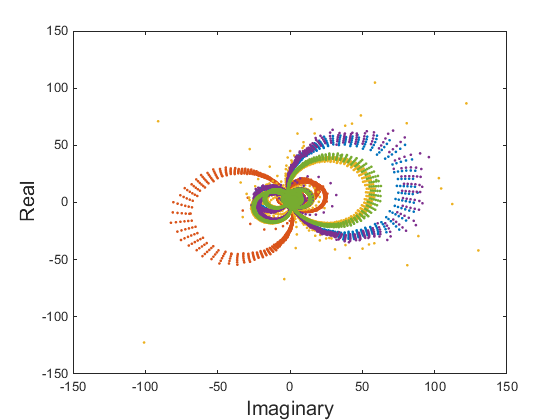}\\
      \includegraphics [scale=4.0]{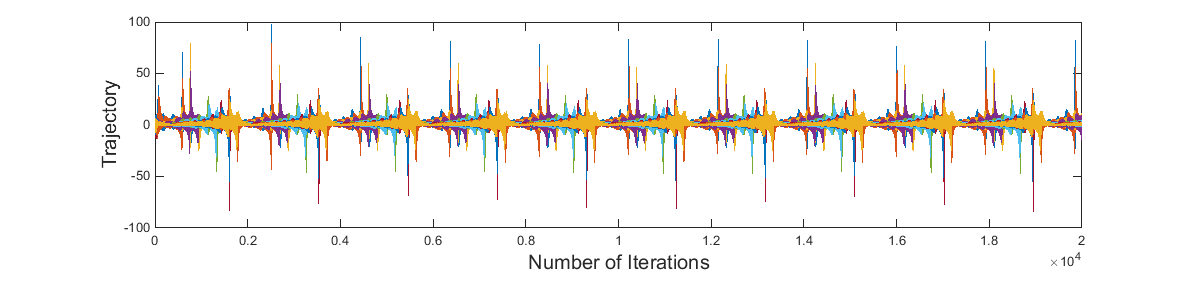}\\
      \includegraphics [scale=4.0]{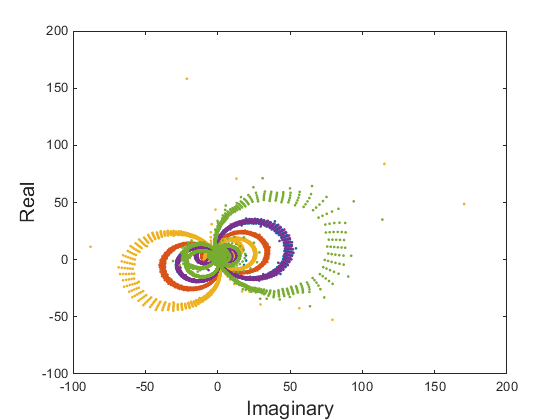}\\
      \includegraphics [scale=4.0]{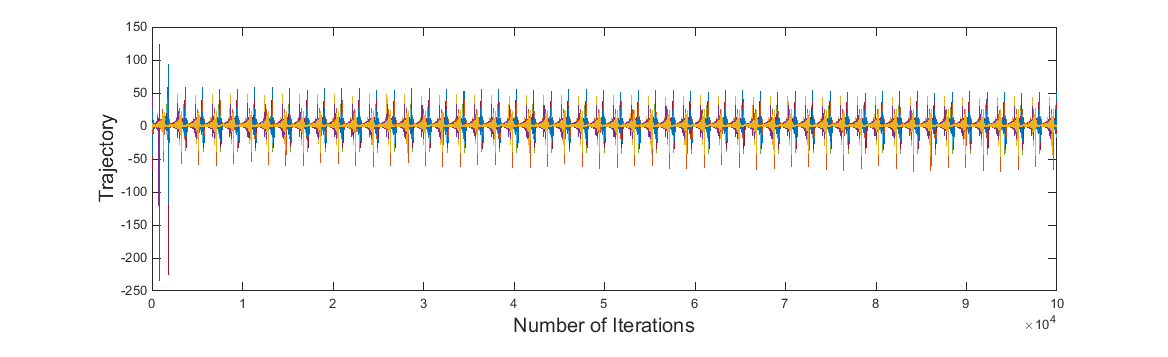}\\
      \includegraphics [scale=4.0]{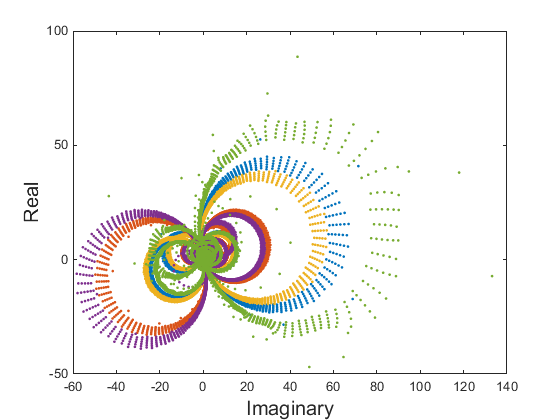}\\

      \end{tabular}
      }
\caption{Periodic Cycle solution for $\alpha=(35, 94)$ and $\beta=(88, 55)$. In Each figure 5 set of initial values are taken and corresponding trajectories are plotted.}
      \begin{center}

      \end{center}
      \end{figure}

\noindent
From the computational aspect, in each of the plots it is evident that for $\alpha=(35, 94)$ and $\beta=(88,55)$ the delay logistic difference equation possess very high order periodic cycles eventually for almost all initial values.

\section{Chaotic Solutions}
Finding chaotic solutions for the delay logistic equation (\ref{equation:total-equationA}) is interesting indeed since in case of real parameter $\alpha$ and $\beta$ and the initial values $z_0$ and $z_{-1}$ there does not exists any chaotic solutions. \\
 \noindent
 The method of Lyapunov characteristic exponents serves as a useful tool to quantify chaos. Specifically Lyapunov exponents measure the rates of convergence or divergence of nearby trajectories. Negative Lyapunov exponents indicate convergence, while positive Lyapunov exponents demonstrate divergence and chaos. The magnitude of the Lyapunov exponent is an indicator of the time scale on which chaotic behavior can be predicted or transients decay for the positive and negative exponent cases respectively. In this present study, the largest Lyapunov exponent is calculated for a given solution of finite length numerically \cite{Wolf}.

 \noindent
 We are looking for complex parameter $\alpha$ and $\beta$ for which for every initial values the solutions are chaotic.  If we consider the parameter $\beta=1$ then the difference equation is known as \emph{Pielou's equation} and which is well studied earlier in case of real line. Here are few examples which we came across computationally.

\begin{table}[H]

\begin{tabular}{| m{7cm} | m{8cm} |}
\hline
\centering   \textbf{Parameter} $\alpha$, $\beta=1$ &
\begin{center}
\textbf{Internal of Lyapunav exponent}
\end{center}\\
\hline
\centering $\alpha=(8, 43)$, $\beta=1$&
\begin{center}
$(1.205, 2.623)$
\end{center}\\
\hline
\centering $\alpha=(1, 97)$, $\beta=1$ &
\begin{center}
$(1.845, 3.028)$
\end{center}\\
\hline
\centering $\alpha=(6, 53)$, $\beta=1$ &
\begin{center}
$(0.785, 1.718)$
\end{center} \\
\hline
\centering $\alpha=(12, 50)$, $\beta=1$ &
\begin{center}
$(0.373, 1.485)$
\end{center}\\
\hline

\end{tabular}
\caption{Cycle solutions of the Pielou's equation ($\beta=1$) for different choice of $\alpha$ and initial values.}
\label{Table:}
\end{table}

\noindent
The largest Lyapunav exponents of the solutions for different initial values are lying in the positive intervals as stated above in the table. This ensures that the solutions are chaotic. It is observed that the chaos is bounded.

\begin{figure}[H]
      \centering

      \resizebox{10cm}{!}
      {
      \begin{tabular}{c}
      \includegraphics [scale=4]{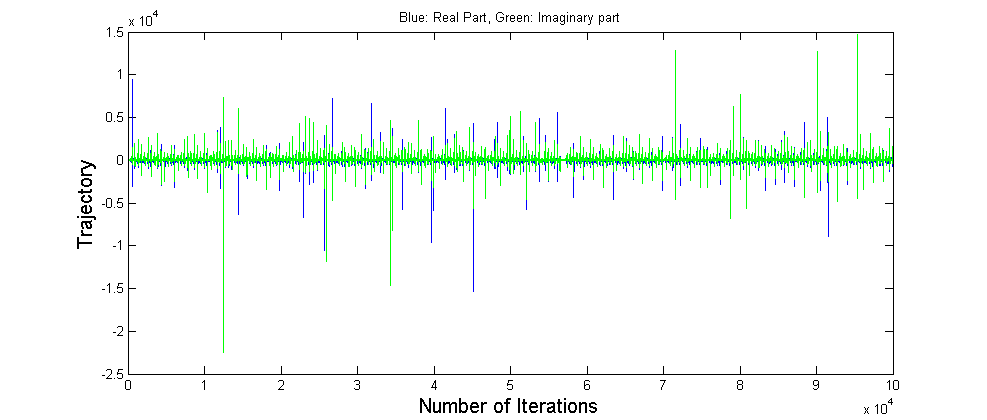}\\
      \includegraphics [scale=4]{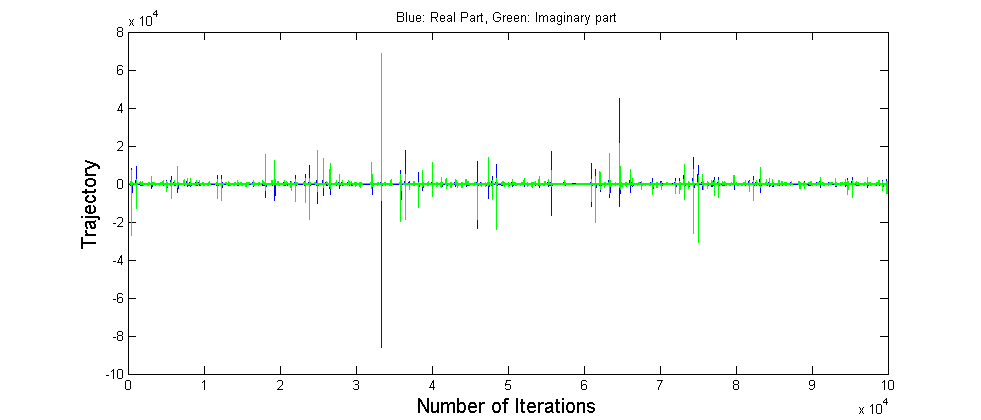}\\
      \includegraphics [scale=4]{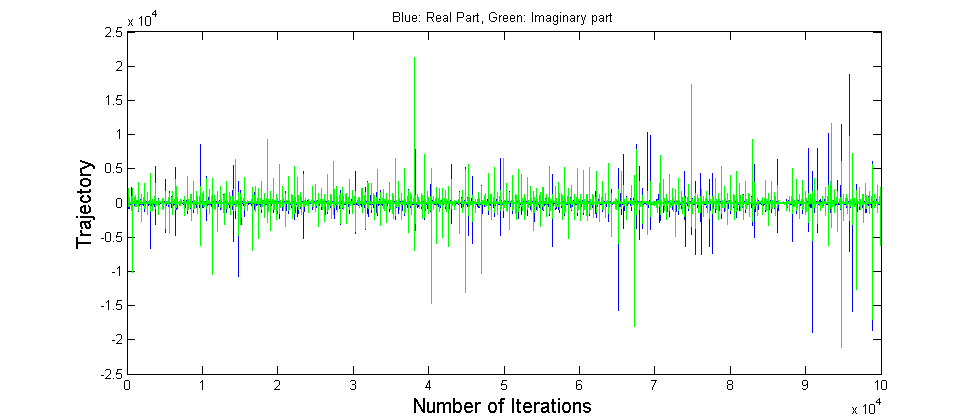}\\
      \includegraphics [scale=4]{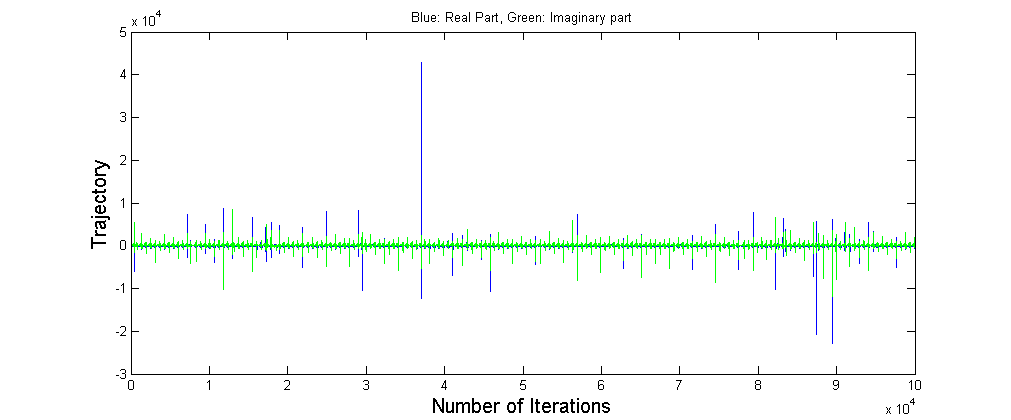}\\

      \end{tabular}
      }
\caption{Chaotic Solutions for the Pielou's equation ($\beta=1$) of four different cases as stated in table 1.}
      \begin{center}

      \end{center}
      \end{figure}
\noindent

\section{Some Interesting Nontrivial Problems}

The computational experiment endorses us to pose the following important open problems in this context.

\begin{openproblem}
Find out the subset of the $\mathbb{D}$ of all possible initial values $z_0$ and $z_1$ for which the solutions of the delay logistic equation possess chaotic solutions for a given parameter $\alpha$ and $\beta$.
\end{openproblem}

\begin{openproblem}
Find out the complex parameters $\alpha$ and $\beta$ such that for any initial values $z_0$ and $z_{-1}$ from the $\mathbb{D}$ the solutions of the delay logistic equation are chaotic.
\end{openproblem}

\begin{openproblem}
Find out the parameters $\alpha$ and $\beta$ such that for any initial values $z_0$ and $z_{-1}$ from the $\mathbb{D}$ the solution of the difference equation are periodic (globally).
\end{openproblem}

\begin{openproblem}
Characterize the parameters $\alpha$ and $\beta$ such that for any initial values $z_0$ and $z_{-1}$ from the $\mathbb{D}$ the solution of the difference equation are periodic (globally). How large the period could be? Is it possible to find an upper bound?
\end{openproblem}

\section{Future Endeavours}

In continuation of the present work for a generalization of the delay logistic equation, $\frac{\alpha z_{n-l}}{1+\beta z_{n-k}}$ with varies $\alpha$ and $\beta$, where $l$ and $k$ are delay terms and it demands similar analysis which we plan to pursue in near future.

\section*{Acknowledgement}
The author thanks \emph{Dr. Esha Chatterjee} and \emph{Dr. Pallab Basu} for discussions and suggestions.



\begin{thebibliography}{9}

\bibitem{S-H} Saber N Elaydi, Henrique Oliveira, José Manuel Ferreira and João F Alves, \emph{Discrete Dyanmics and Difference Equations,
Proceedings of the Twelfth International Conference on Difference Equations and Applications}, World Scientific Press, 2007.

\bibitem{B-J} B. Beckermann and J. Wimp, Some dynamically trivial mappings with applications to the improvement of simple iteration, \emph{Comput. Math. Appl.} 24(1998), 89-97.

\bibitem{C-I} Ch. G. Philos, I. K. Purnaras, and Y. G. Sficas, Global attractivity in a nonlinear difference equation, \emph{Appl. Math. Comput.} 62(1994), 249-258.

\bibitem{Ca-L} E. Camouzis and G. Ladas, \emph{Dynamics of Third Order Rational Difference Equations; With Open Problems and
Conjectures}, Chapman \& Hall/CRC Press, 2008.

\bibitem{G-R} N. K. Govil and Q, I. Rahman, On the Eneström-Kakeya theorem, \emph{Tohoku Mathematical Journal}, 20(1968), 126-136.

\bibitem{Ca-Ch-L-Q-1} E. Camouzis, E. Chatterjee, G. Ladas and E. P. Quinn, On Third Order Rational Difference Equations – Open Problems and
Conjectures, \emph{Journal of Difference Equations and Applications}, 10(2004), 1119 – 1127.

\bibitem{G-T-S} G. T. Cargo and O. Shisha, Zeros of polynomials and fractional order differences of their coefficients, \emph{Journ. Math. Anal. Appl.}, 7 (1963), 176-182.

\bibitem{J-L} A. Joyal, G. Labelle and Q.I.Rahaman, On the location of zeros of polynomials, \emph{Canad. Math. Bull.}, 10 (1967), 53-63.

\bibitem{P-V} P. V. Krishnaiah, On Kakeya's theorem, \emph{Journ. London Math. Soc.}, 30 (1955), 314-319.

\bibitem{Ca-Ch-L-Q-2} E. Camouzis, E. Chatterjee, G. Ladas and E. P. Quinn, The Progress Report on Boundedness Character of Third Order Rational
Equations, \emph{Journal of Difference Equations and Applications}, 11(2005), 1029-1035.

\bibitem{Wolf} A. Wolf, J. B. Swift, H. L. Swinney and J. A. Vastano,  Determining Lyapunov exponents from a time series {Physica D}, 126(1985), 285-317.

\bibitem{Ch} E. Chatterjee, On the Global Character of the solutions of
$\displaystyle{x_{n+1}=\frac{\alpha+ \beta x_{n} + \gamma x_{n-k}}{A + x_{n-k}}},$ \emph{%
International Journal of Applied Mathematics}, 26(1)(2013), 9-18.

\bibitem{G-K-S} E.A. Grove, Y. Kostrov and S.W. Schultz, On Riccati Difference Equations With Complex Coefficients, \emph{Proceedings of the 14th International Conference on Difference Equations and Applications, Istanbul, Turkey%
}, March 2009.

\bibitem{S-E} Sk. S. Hassan, E. Chatterjee, {Dynamics of the equation $\displaystyle{z_{n+1}=\frac{\alpha + \beta z_{n}}{A+z_{n-1}}}$ in the Complex Plane}, \emph{Communicated to Computational Mathematics and Mathematical Physics, Springer}, 2014

\bibitem{K-L} M.R.S. Kulenovi$\acute{c}$ and G. Ladas, \emph{Dynamics of Second Order Rational Difference Equations; With Open Problems and
Conjectures}, Chapman \& Hall/CRC Press, 2001.


\bibitem{Ko-L} V.L. Kocic and G. Ladas, \emph{Global Behaviour of Nonlinear
Difference Equations of Higher Order with Applications}, Kluwer Academic
Publishers, Dordrecht, Holland, 1993.

\bibitem{M-Ma} J. Rubi$\acute{o}$-Masseg$\acute{u} $ and V. Ma$\tilde{n}$osa, Normal forms for rational difference equations
with applications to the global periodicity problem, \emph{J. Math. Anal. Appl.} 332(2007), 896-918.
\end{thebibliography}
\end{document}